\documentclass[12pt]{amsart}
\usepackage{amsfonts,amsmath,amsthm,amssymb}
\usepackage{latexsym}
\usepackage{graphics}
\oddsidemargin  0.0in \evensidemargin 0.0in \textwidth      6.5in
\headheight     0.0in \topmargin      0.0in \textheight=9.0in
\newtheorem{theorem}{Theorem}[section]
\newtheorem{corollary}[theorem]{Corollary}

\newtheorem{lemma}[theorem]{Lemma}
\newtheorem{example}[theorem]{Example}

\newtheorem{proposition}[theorem]{Proposition}

\newtheorem{definition}[theorem]{Definition}
\theoremstyle{definition}

%\newcounter{rem}
%\setcounter{rem}{0}
%\newcommand{\rem}[1]{
%    \marginpar{\tiny\refstepcounter{rem}{\small\therem(MK): #1}}}
\theoremstyle{remark}
\newtheorem{rem}[theorem]{Remark}
\theoremstyle{remark}

\newcommand{\beql}[1]{\begin{equation}\label{#1}}
\newcommand{\eeq}{\end{equation}}

\begin{document}

\title[Exotic complex Hadamard matrices]{Exotic complex Hadamard matrices and their equivalence}

\author{Ferenc Sz\"oll\H{o}si}

\dedicatory{Dedicated to Professor Warwick de Launey on the occasion of his $50^{th}$ birthday.}

\date{August, 2009.}

\address{Ferenc Sz\"oll\H{o}si: Department of Mathematics and its Applications, Central European University, H-1051, N\'ador u. 9, Budapest, Hungary}\email{szoferi@gmail.com}

\thanks{This work was supported by Hungarian National Research Fund OTKA-K77748.\\
\indent Some results of this paper have appeared in the Master thesis of the author (in Hungarian) \cite{dipl}.}

\begin{abstract}
In this paper we use a design theoretical approach to construct new, previously unknown complex Hadamard matrices. Our methods generalize and extend the earlier results of \cite{HJ}, \cite{MW} and offer a theoretical explanation for the existence of some sporadic examples of complex Hadamard matrices in the existing literature. As it is increasingly difficult to distinguish inequivalent matrices from each other, we propose a new invariant, the fingerprint of complex Hadamard matrices. As a side result, we refute a conjecture of Koukouvinos et al.\ on $(n-8)\times (n-8)$ minors of real Hadamard matrices \cite{KLMS}.
\end{abstract}

\maketitle

{\bf 2000 Mathematics Subject Classification.} Primary 05B20, secondary 46L10.

{\bf Keywords and phrases.} {\it Complex Hadamard matrix, Block design, Minor}.

\section{Introduction}
A complex Hadamard matrix $H$ is a square $n\times n$ matrix with arbitrary unimodular entries with complex orthogonal rows, i.e.\ $HH^\ast=nI$. They are the natural generalization of real Hadamard matrices. Constructions of complex Hadamard matrices are motivated by their various applications in quantum information theory \cite{wer}, harmonic analysis \cite{KM}, \cite{tao}, operator theory \cite{popa}, and combinatorics \cite{beth}. Let us recall that Hadamard matrices $H$ and $K$ are \emph{equivalent} if $H=P_1D_1KD_2P_2$ holds for some permutational matrices $P_1, P_2$ and unitary diagonal matrices $D_1, D_2$. An Hadamard matrix with its first row and first column consisting only of $1$s is said to be \emph{normalized}.

The pragmatic example for complex Hadamard matrices are the (rescaled) Fourier matrices $F_n$, and more generally the Butson type Hadamard matrices, all of whose entries are some fixed $m^{th}$ roots of unity \cite{Bu}. As the concept itself is still a discrete generalization of real Hadamard matrices, many aspects of the real theory can be successfully applied to them obtaining interesting structural-, existence- and non-existence type theorems \cite{dL}, \cite{W}. An infinite family of ``exotic'' (i.e.\ non Butson type) matrices, the circulant complex Hadamard matrices, were discovered in the early $90$s by the pioneering work of Bj\"orck \cite{BJ} and the follow-up papers \cite{HJ} and \cite{MW}. Recently it was pointed out by Di\c{t}\u{a} that the so-called generalized Kronecker product construction leads to matrices with free parameters, and therefore examples of non Butson type complex Hadamard matrices in composite dimensions \cite{dita}. The only examples of parametric families of prime orders are Petrescu's biunitaries \cite{petrescu}.

The outline of the paper is as follows. In Sections $2$--$4$ we use design theoretical methods to construct new examples of complex Hadamard matrices. Besides rediscovering some well-known matrices of small orders, we give a new and systematic proof for the existence of non-standard circulant complex Hadamard matrices of prime orders \cite{HJ}, \cite{MW}, and we present new, previously unknown families of complex Hadamard matrices as well. In particular, we show that the matrices $U_{15}, V_{15}$ and $W_{9A}$ are new to the literature. As it is increasingly difficult to distinguish inequivalent complex Hadamard matrices from each other, in Section $5$ we introduce a new invariant, the \emph{fingerprint} of complex Hadamard matrices. By combining Craigen's paper \cite{craigen} with the theory developed in Section $5$ we show, as a side result, that the $(n-8)\times(n-8)$ minors of real Hadamard matrices of order $n$ cannot take the values $k\cdot2^7\cdot n^{n/2-8}$ for $k\in\{28,29,30,31\}$ disproving a conjecture of Koukouvinos et al.\ \cite{KLMS}.

Through the paper we shall use the standard notations for well-known matrices such as $F_n, C_n, P_n$ etc. The description of these matrices is available in the online version of the Tadej--\.Zyczkowski catalogue of complex Hadamard matrices \cite{karol}, \cite{web}. Notations $U_{4m-1}$, $V_{4m-1}$, $W_{(4m+1)A}$ and $W_{(4m+1)B}$ are to be introduced in this paper.

\section{Block designs and complex Hadamard matrices}\label{sec2}
In this section we consider symmetric $2$-designs with the usual parameters $v,k,\lambda$. We always identify a block design $\mathcal{B}$ with its incidence matrix $B$. In this way we simply consider $0$-$1$ matrices of order $v$, with $k$ $1$s in every row and column satisfying $BB^\ast=(k-\lambda)I+\lambda J$, where $J$ is the all $1$ matrix. To exclude trivial cases, we suppose, as usual, that $\lambda<k<v$.

It is well-known that the algebraic object behind real Hadamard matrices is the $2$-$(4m-1,2m-1,m-1)$ Hadamard design. The combinatorial nature and the high level of internal symmetry of the design ensures that it can be trivially transformed to a real Hadamard matrix of order $4m$, by simply exchanging every $0$ with $-1$, and padding the obtained matrix with a full row and column of $1$s. It is natural to ask  whether we can construct complex Hadamard matrices in a similar fashion as well. In the following we give a characterization of complex Hadamard matrices, composed of two different entries. The main concept of this section relies on the following
\begin{definition}
We say that a block design $B$ \emph{induces} a complex Hadamard matrix if after exchanging every $0$ in $B$ with a fixed complex number $a$ of modulus one, the obtained matrix is complex Hadamard. We say that this is an \emph{induced} complex Hadamard matrix.
\end{definition}
Recall that a complex Hadamard matrix is \emph{regular}, if the absolute value of the sum of the entries in each row is constant.
The following two lemmata indicate that there is a one-to-one correspondence between block designs satisfying \eqref{1} and regular complex Hadamard matrices, composed of two different entries.
\begin{lemma}
A $2$-$(v,k,\lambda)$ design induces a complex Hadamard matrix if and only if
\begin{equation}\label{1}
\frac{v-\sqrt{v}}{2}\leq k\leq \frac{v+\sqrt{v}}{2}.
\end{equation}
\end{lemma}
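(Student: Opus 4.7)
My plan is to parameterize the induced matrix, translate $HH^\ast=vI$ into a scalar condition on the chosen unimodular entry $a$, and then combine that condition with the parameter identities for a symmetric design.

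First I would write the induced matrix as
$$H = aJ + (1-a)B,$$
where $B$ is the incidence matrix of the $2$-$(v,k,\lambda)$ design. Using $BB^\ast = (k-\lambda)I+\lambda J$, $BJ = JB^\ast = kJ$, $J^2=vJ$, and $|a|=1$, I would expand
$$HH^\ast = |1-a|^2\, BB^\ast + a(1-\bar a)\, JB^\ast + (1-a)\bar a\, BJ + |a|^2 J^2.$$
Setting $c:=a+\bar a-2$ (which equals $-|1-a|^2$ and so is a nonpositive real), this collapses after a short computation to
$$HH^\ast \;=\; -c(k-\lambda)\,I \;+\; \bigl(v+c(k-\lambda)\bigr)J.$$

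Next I would extract the condition $HH^\ast = vI$. A small but pleasant observation is that the $I$- and $J$-coefficient equations coincide: both reduce to $c(k-\lambda)=-v$, equivalently
$$\operatorname{Re}(a) \;=\; 1-\frac{v}{2(k-\lambda)}.$$
A unimodular $a$ with this real part exists if and only if the right-hand side lies in $[-1,1]$. Since $k>\lambda$, the upper bound is automatic, and the only nontrivial constraint is $k-\lambda \geq v/4$.

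Finally I would plug in the Fisher-type identity $\lambda(v-1)=k(k-1)$ for symmetric designs, which gives $k-\lambda = k(v-k)/(v-1)$. The inequality $k-\lambda\geq v/4$ then reduces to the quadratic
$$4k^2-4vk+v(v-1)\;\leq\; 0,$$
whose roots (from discriminant $16v$) are exactly $(v\pm\sqrt v)/2$; the claimed range follows. I do not foresee any real obstacle: once the parameterization $H=aJ+(1-a)B$ is in place the rest is mechanical. The only step worth double-checking is the coincidence of the $I$- and $J$-coefficient conditions, since that is the reason a single real parameter $\operatorname{Re}(a)$ suffices to pin down the full Hadamard property (and also the reason the resulting matrix is automatically regular, as one will then read off in the companion lemma).
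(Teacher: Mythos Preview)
Your proof is correct and follows essentially the same route as the paper: both derive the single scalar condition $\Re(a)=1-\dfrac{v}{2(k-\lambda)}=1-\dfrac{v(v-1)}{2k(v-k)}$ and then impose $\Re(a)\ge -1$ to obtain the quadratic inequality in $k$. The only difference is packaging---you compute $HH^\ast$ globally via $H=aJ+(1-a)B$ and the identity $BB^\ast=(k-\lambda)I+\lambda J$, whereas the paper counts coincidences in a single pair of rows---but these are the same calculation in two notations.
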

\begin{proof}
Consider a block design $B$. After exchanging every $0$ with $a$, the orthogonality relation between any two rows of it reads
\begin{equation}\label{2}
\lambda+2(k-\lambda)\Re[a]+v+\lambda-2k=0.
\end{equation}
From this, by using $\lambda=k(k-1)/(v-1)$, one can express $\Re[a]$ in terms of $v,k$, and as $|a|=1$ should hold, we have
\begin{equation}\label{3}
\Re[a]=1-\frac{v(v-1)}{2k(v-k)}\geq -1.
\end{equation}
After rearranging the desired inequality follows.
\end{proof}
Clearly, induced Hadamard matrices are regular. We have the following
\begin{lemma}
Any regular complex Hadamard matrix $H$ of order $v$, composed of entries $\{1,a\}$, corresponds to a $2$-$(v,k,\lambda)$ block design.
\end{lemma}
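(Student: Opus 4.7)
The plan is to extract the $2$-design structure directly from the Hadamard orthogonality of the rows of $H$. Let $k_i$ denote the number of $1$-entries in row $i$, and let $\lambda_{ij}$ denote the number of columns in which both rows $i$ and $j$ carry the entry $1$. The first step is to expand $\langle r_i,r_j\rangle$ by partitioning the $v$ columns into the four classes according to which of $\{1,a\}$ appears in each of the two rows; using $|a|^2=1$ this yields a closed form that is linear in $\lambda_{ij}, k_i, k_j, a, \bar a$. Setting this inner product equal to zero for all $i\ne j$ then supplies the combinatorial constraints we need.

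Separating the vanishing inner product into real and imaginary parts does almost all of the work. The imaginary part collapses to $(k_j-k_i)\Im(a)=0$, so whenever $a$ is non-real this immediately forces $k_i=k_j$ for every pair $i,j$, i.e.\ each row contains the same number $k$ of $1$-entries. (In the borderline case $a=-1$ the imaginary part is vacuous, and one invokes the regularity hypothesis instead: $|2k_i-v|$ is constant, so $k_i\in\{k,v-k\}$, and the rows with $k_i=v-k$ can be negated — an operation that sits inside the Hadamard equivalence and leaves the entries in $\{1,-1\}$ — to reduce to the uniform case.) Substituting $k_i=k_j=k$ into the real part produces an affine equation in $\lambda_{ij}$ whose leading coefficient $1-\Re(a)$ is nonzero (as $a\ne 1$), so $\lambda_{ij}$ is forced to equal a single constant $\lambda$ depending only on $v$, $k$ and $\Re(a)$.

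Reversing the substitution $a\mapsto 0$ then gives a $0$-$1$ matrix $B$ with constant row sum $k$ and constant pairwise row intersection $\lambda$, that is, $BB^\ast=(k-\lambda)I+\lambda J$. This is precisely the incidence-matrix condition for a symmetric $2$-$(v,k,\lambda)$ design, which completes the correspondence. The computation itself is routine and mirrors the one in the previous lemma; the only real subtlety is the degenerate case $a=-1$, where orthogonality alone leaves room for mixed row types and one genuinely needs regularity together with the implicit row-negation freedom in the equivalence relation to pin down a uniform value of $k_i$.
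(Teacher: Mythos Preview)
Your argument is correct and follows the same overall scheme as the paper: expand the Hermitian inner product of two rows according to the four $(1,1),(1,a),(a,1),(a,a)$ column types, then read off that $\lambda_{ij}$ is forced to a single value once the row sums are constant. The computation $\lambda_{ij}=k-\dfrac{v}{2(1-\Re a)}$ matches the paper's formula $\lambda=k+\dfrac{v}{2\Re[a]-2}$.

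There is one genuine difference worth noting. For $a\neq -1$ the paper invokes the \emph{regularity} hypothesis (``an easy (but tedious) analysis shows that the regularity condition implies that every row and column contains the same number of $1$s''), whereas you bypass regularity entirely: the imaginary part $(k_j-k_i)\Im a=0$ of the orthogonality relation already forces all $k_i$ equal whenever $a$ is non-real. This is exactly the argument the paper defers to its \emph{next} lemma (Lemma~2.5), so in effect you have merged the two results, proving something a bit stronger here at no extra cost. Conversely, your handling of the real case $a=-1$ is more explicit than the paper's one-line appeal to the Menon design: you spell out that regularity gives $k_i\in\{k,v-k\}$ and that negating the minority rows (a move inside Hadamard equivalence) normalises to a single $k$. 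That is a fair reading of what the paper leaves implicit, though strictly speaking you are then associating a design to an equivalent matrix rather than to $H$ itself; the paper's phrase ``corresponds to'' is loose enough to accommodate this.
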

\begin{proof}
If $a=-1$, then $H$ is real, and the corresponding object is the Menon design. Otherwise, if $a\neq -1$, then an easy (but tedious) analysis shows that the regularity condition implies that every rows and columns contains the same number of $1$s, say $k$. Now consider any two rows of $H$, and suppose they share a common $1$ in exactly $\lambda$ coordinates. The same orthogonality relation \eqref{2} implies that for these two rows we have $\lambda=k+v/(2\Re[a]-2)$. Therefore the value of $\lambda$ is independent of the choice of the rows.
\end{proof}
Observe that the parameters of the Hadamard design satisfy the inequality \eqref{1}. We have the following
\begin{theorem}\label{t1}
Suppose that we have a $2$-$(4m-1,2m-1,m-1)$ Hadamard design, represented by an incidence matrix $U$. Then, after replacing every $0$ with
\begin{equation}
a=-1+\frac{1}{2m}\pm\mathbf{i}\frac{\sqrt{4m-1}}{2m}
\end{equation}
in $U$ we obtain a complex Hadamard matrix $U_{4m-1}$.
\end{theorem}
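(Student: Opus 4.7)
The plan is to reduce the theorem directly to Lemma 2.2 (the ``if'' direction of the induction criterion), since the Hadamard design parameters $(v,k,\lambda)=(4m-1,2m-1,m-1)$ are tailor-made for that lemma. The whole proof will amount to verifying inequality \eqref{1} for these parameters and then reading off the explicit value of $a$ from \eqref{3}.

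First I would check that $(4m-1,2m-1,m-1)$ satisfies \eqref{1}: substituting $v=4m-1$ and $k=2m-1$, the inequality $\frac{v-\sqrt{v}}{2}\leq k\leq\frac{v+\sqrt{v}}{2}$ simplifies to $-\sqrt{4m-1}\leq -1\leq\sqrt{4m-1}$, which is evident for all $m\geq 1$. By Lemma 2.2, the Hadamard design therefore induces a complex Hadamard matrix when its zeros are replaced by the appropriate $a$ with $|a|=1$.

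Next I would pin down $a$ by plugging into formula \eqref{3}. A short computation gives
\[
\Re[a]=1-\frac{v(v-1)}{2k(v-k)}=1-\frac{(4m-1)(4m-2)}{2(2m-1)(2m)}=1-\frac{4m-1}{2m}=-1+\frac{1}{2m}.
\]
Since $|a|=1$, the imaginary part is determined up to sign by
\[
\Im[a]^{2}=1-\Re[a]^{2}=1-\frac{(2m-1)^{2}}{4m^{2}}=\frac{4m-1}{4m^{2}},
\]
so $\Im[a]=\pm\frac{\sqrt{4m-1}}{2m}$, which matches the stated formula exactly.

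Finally, I would combine these two observations: inequality \eqref{1} holds, so Lemma 2.2 produces a regular complex Hadamard matrix whose off-$1$ entry is precisely the $a$ we just computed; the theorem follows. There is no real obstacle here — the only subtlety is the bookkeeping that the two sign choices for $\Im[a]$ correspond to complex-conjugate matrices, both of which are legitimate; everything else is a direct substitution into the formulas of the previous lemmas.
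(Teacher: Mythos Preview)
Your proof is correct and follows exactly the route the paper intends: the paper simply remarks ``Observe that the parameters of the Hadamard design satisfy the inequality \eqref{1}'' and then states Theorem~\ref{t1} as an immediate consequence of Lemma~2.2, which is precisely what you have spelled out in detail. The explicit computation of $\Re[a]$ and $\Im[a]$ you give is the natural way to extract the stated value of $a$ from formula~\eqref{3}.
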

It is well-known that the Paley-I construction gives rise to circulant Hadamard designs when $p\equiv 3\ (4)$ is a prime. Using this fact the authors of \cite{MW} constructed circulant complex Hadamard matrices of prime orders, without exploring the possibilities of constructing Hadamard matrices of composite orders as well. In particular, we have the following
\begin{corollary}[Munemasa--Watatani, \cite{MW}]
For every prime $p\equiv 3\ (4), p\geq 7$ there exists a circulant complex Hadamard matrix of order $p$, inequivalent to the Fourier matrix $F_p$.
\end{corollary}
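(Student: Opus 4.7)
My plan is to apply Theorem~\ref{t1} to the Paley--Hadamard design (which is circulant) and then separate the induced matrix from $F_p$ using a Haagerup--style cross-ratio invariant.

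First, I would recall that for a prime $p\equiv 3\pmod 4$, the Paley-I construction on the set of nonzero quadratic residues of $\mathbb{F}_p$ yields a symmetric $2$-$(p,(p-1)/2,(p-3)/4)=2$-$(4m-1,2m-1,m-1)$ Hadamard design whose incidence matrix $B$ is circulant, the $(i,j)$-entry depending only on $j-i\bmod p$. Theorem~\ref{t1} then immediately produces a complex Hadamard matrix $U_p$ by replacing each $0$ of $B$ with the constant $a=-1+\tfrac{1}{2m}+\mathbf{i}\tfrac{\sqrt{4m-1}}{2m}$; since the substitution is entrywise, $U_p$ remains circulant.

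For inequivalence with $F_p$, I would introduce the cross-ratio set
\[
S(H)=\bigl\{h_{ij}\,\overline{h_{ik}}\,\overline{h_{\ell j}}\,h_{\ell k}:1\le i,\ell,j,k\le n\bigr\},
\]
whose invariance under $H\mapsto P_1D_1HD_2P_2$ with $D_1=\mathrm{diag}(d_s)$, $D_2=\mathrm{diag}(e_t)$ unitary is a one-line check: the four phase factors contribute $|d_i|^2|d_\ell|^2|e_j|^2|e_k|^2=1$, and permutations merely relabel indices. For $F_p$ the cross ratio simplifies to $\omega^{(i-\ell)(j-k)}$ with $\omega=e^{2\pi\mathbf{i}/p}$, and since $p$ is prime the exponent $(i-\ell)(j-k)$ sweeps all of $\mathbb{F}_p$, giving $|S(F_p)|=p$. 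For $U_p$, each entry lies in $\{1,a\}$, so every cross ratio has the form $a^r$ with $r\in\{-2,-1,0,1,2\}$, whence $|S(U_p)|\le 5$. Since $p\ge 7>5$, the two invariants disagree and $U_p\not\sim F_p$.

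The only small checks required are that the argument does not collapse accidentally: one notes $a\ne\pm 1$ (since $\Re a=-1+1/(2m)\in(-1,0)$) and, if sharpness of $|S(U_p)|=5$ is desired, that $a^3=1$ would force $\Re a=-1/2$, i.e.\ $m=1$, $p=3$, outside our range. The main obstacle is really just selecting a clean gauge-invariant quantity; the cross ratio is the simplest such invariant and foreshadows the fingerprint construction of Section~5, after which the proof reduces to a bare comparison of cardinalities.
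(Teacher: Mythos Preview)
Your argument is correct. The construction half is exactly what the paper does: the text preceding the corollary invokes the circulant Paley-I Hadamard design and Theorem~\ref{t1}, just as you do. The paper does not actually prove the inequivalence with $F_p$ here---it simply attributes the result to Munemasa--Watatani---so your cross-ratio argument is an addition rather than a deviation. Moreover, your invariant $S(H)$ is precisely the Haagerup set $\Lambda(H)$ that the paper introduces in Section~5, so your proof is fully in the spirit of the paper and could serve as the omitted verification. The cardinality comparison $|S(U_p)|\le 5<p=|S(F_p)|$ is clean and needs no further justification beyond what you wrote.
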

\begin{rem}
It is an important open problem in the theory of operator algebras to decide whether there exist infinitely many inequivalent complex Hadamard matrices of prime orders \cite{popa}.
\end{rem}
Applying Theorem \ref{t1} for $m=1,2,3$ we get the matrices $F_3, C_{7A}, C_{7B}$ and $C_{11A}, C_{11B}$ respectively, while for $m=4$ we obtain from the $5$ inequivalent Hadamard designs at least $5$ new, previously unknown complex Hadamard matrices of order $15$. We exhibit a particular example obtained from the five fold tensor product of $F_2$:

\tiny
\begin{equation}
U_{15}=\left[
\begin{array}{ccccccccccccccc}
 a & 1 & a & 1 & a & 1 & a & 1 & a & 1 & a & 1 & a & 1 & a \\
 1 & a & a & 1 & 1 & a & a & 1 & 1 & a & a & 1 & 1 & a & a \\
 a & a & 1 & 1 & a & a & 1 & 1 & a & a & 1 & 1 & a & a & 1 \\
 1 & 1 & 1 & a & a & a & a & 1 & 1 & 1 & 1 & a & a & a & a \\
 a & 1 & a & a & 1 & a & 1 & 1 & a & 1 & a & a & 1 & a & 1 \\
 1 & a & a & a & a & 1 & 1 & 1 & 1 & a & a & a & a & 1 & 1 \\
 a & a & 1 & a & 1 & 1 & a & 1 & a & a & 1 & a & 1 & 1 & a \\
 1 & 1 & 1 & 1 & 1 & 1 & 1 & a & a & a & a & a & a & a & a \\
 a & 1 & a & 1 & a & 1 & a & a & 1 & a & 1 & a & 1 & a & 1 \\
 1 & a & a & 1 & 1 & a & a & a & a & 1 & 1 & a & a & 1 & 1 \\
 a & a & 1 & 1 & a & a & 1 & a & 1 & 1 & a & a & 1 & 1 & a \\
 1 & 1 & 1 & a & a & a & a & a & a & a & a & 1 & 1 & 1 & 1 \\
 a & 1 & a & a & 1 & a & 1 & a & 1 & a & 1 & 1 & a & 1 & a \\
 1 & a & a & a & a & 1 & 1 & a & a & 1 & 1 & 1 & 1 & a & a \\
 a & a & 1 & a & 1 & 1 & a & a & 1 & 1 & a & 1 & a & a & 1
\end{array}
\right],\ \ \ a=-\frac{7}{8}+\mathbf{i}\frac{\sqrt{15}}{8}.
\end{equation}
\normalsize
So far we have shown that regular complex Hadamard matrices, composed of two different entries, correspond to block designs. It is natural to ask whether there are examples of non-regular complex Hadamard matrices, composed of two different entries as well. Somewhat surprisingly, such matrices exist in the real case only.
\begin{lemma}
Suppose that we have a complex Hadamard matrix $H$, composed of two different entries, say $\{1,a\}$. If $a\neq -1$, then $H$ is regular.
\end{lemma}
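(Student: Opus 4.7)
The plan is to take two rows and write out the orthogonality relation explicitly, then separate real and imaginary parts of $a$. Since $H$ has only the two distinct entries $1$ and $a$ (with $|a|=1$), the entries are completely determined once we know, for each row $i$, the number $k_i$ of positions containing a $1$, and for each pair $i\neq j$ the number $\lambda_{ij}$ of positions where both rows carry a $1$.

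Concretely, I would compute the Hermitian inner product of rows $i$ and $j$ by splitting the $v$ coordinates into four classes according to whether each row has a $1$ or an $a$. Using $a\bar a=1$ this collapses to
\begin{equation*}
v-k_i-k_j+2\lambda_{ij}+(k_i-\lambda_{ij})\bar a+(k_j-\lambda_{ij})a=0.
\end{equation*}
Writing $a=\alpha+\mathbf{i}\beta$, the imaginary part of this equation reads $(k_j-k_i)\beta=0$. Because $H$ uses two different entries we have $a\neq 1$, and by hypothesis $a\neq -1$, so $a$ is not real and hence $\beta\neq 0$. Therefore $k_i=k_j$ for every pair of rows, i.e.\ each row of $H$ contains the same number $k$ of $1$s. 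The absolute value of the row sum is then $|k+(v-k)a|$, a constant, which is precisely the regularity condition.

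The only issue to watch for is the case $a=\pm 1$. The case $a=1$ is excluded tautologically since we assume $H$ has two distinct entries, and the case $a=-1$ is excluded by hypothesis (and corresponds to the genuinely non-regular real Hadamard matrices, so the hypothesis $a\neq -1$ is essential and cannot be dropped). I do not anticipate any serious obstacle: the argument is a one-line computation on the imaginary part of the orthogonality relation, and the same reasoning applied to columns (via $H^\ast H=vI$) could even be used to recover the column-sum statement, though regularity as defined refers to row sums only.
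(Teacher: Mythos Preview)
Your proof is correct and follows essentially the same approach as the paper: both write out the Hermitian inner product of two rows as an integer combination of $1$, $a$, and $\overline{a}$, then use the imaginary part (equivalently, the paper phrases it as ``$a$ is non-real, so $B=C$'') to force the number of $1$s in any two rows to agree. Your parametrization via $k_i,k_j,\lambda_{ij}$ is just a relabelling of the paper's counts $A,B,C,D$, and your explicit closing remark linking constant $k$ to the regularity condition $|k+(v-k)a|=\text{const}$ makes the final step a bit more transparent than in the paper.
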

\begin{proof}
Consider the inner product of any two rows of $H$:
\beql{}A+Ba+C\overline{a}+D=0,\eeq
where $A,B,C,D$ are integral numbers describing in how many coordinates meet the pairs $(1,1)$, $(1,a)$, $(a,1)$ and $(a,a)$, respectively, i.e.\ $A$ denotes the number of columns sharing a common $1$ in the rows considered, etc. As $a$ is non-real, we have $B=C$, and therefore $A+B=A+C$. This shows that the number of $1$s is the same in every two rows of $H$, hence $H$ is regular.
\end{proof}\begin{rem}
Let $n:=k-\lambda$ be the \emph{order} of a symmetric design. It is well known, that $4n-1\leq v$. On the other hand, equation \eqref{2} implies $v\leq 4n$. Therefore induced complex Hadamard matrices correspond to the Hadamard- and Menon designs only.\footnote{We thank Professor Chris Godsil for pointing out this fact.}
\end{rem}
\section{Conference matrices redux}\label{s3}
In the previous section we have described the rediscovery of the so called cyclic $p$-roots of ``index $2$'' type matrices of orders $p\equiv 3\ (4)$. It is quite natural then to try to look at the underlying structure of the circulant complex Hadamard matrices when $p\equiv 1\ (4)$ as well. To do this, we invoke an other design theoretical object, the conference matrices. Recall, that a conference matrix $C$ is a square matrix with $0$s on the main diagonal and $\pm1$ otherwise satisfying $CC^\ast=(n-1)I$. A conference matrix is normalized, if all the nonzero entries in the first row and column are $1$. We are interested in symmetric conference matrices of orders $4m+2$. The main result of this section is the following
\begin{theorem}\label{t2}
Given any normalized, symmetric conference matrix $C$ of order $4m+2\geq 6$ one can construct a complex Hadamard matrix in the following way: discard the first row and column of $C$ replace the $0$s with $1$, replace the off-diagonal $1$s with $c$ and replace the off-diagonal $-1$s with $\overline{c}$ where $c$ is an unimodular complex number with
\begin{equation}\label{6}
\Re[c]=-\frac{1}{4m}\pm\frac{\sqrt{4m+1}}{4m}.
\eeq
This procedure give rise to complex Hadamard matrices $W_{(4m+1)A}$ and $W_{(4m+1)B}$, depending on the sign of \eqref{6}.
\end{theorem}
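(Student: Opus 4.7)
My plan is to compute $HH^*$ directly, using the structure of the truncated matrix $C'$ (the $(4m+1)\times(4m+1)$ block obtained from $C$ after removing its first row and column) together with the identity $CC^*=(4m+1)I$.

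First I would read off the row structure of $C'$. Since $C$ is normalized, its first row consists of a $0$ followed by $1$s; orthogonality of the first row with any other row of $C$ then forces the sum of the remaining entries in that row to vanish. Consequently every row of $C'$ has a single $0$ on the diagonal and exactly $2m$ entries equal to $+1$ and $2m$ entries equal to $-1$ among the off-diagonal positions. In particular every row of $H$ consists of one entry equal to $1$ together with $2m$ copies of $c$ and $2m$ copies of $\overline{c}$, so the diagonal of $HH^*$ is automatically $(4m+1)I$.

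Next, for distinct row indices $r,s$ I would split the off-diagonal entry $(HH^*)_{rs}=\sum_j H_{rj}\overline{H_{sj}}$ into two parts. The two distinguished columns $j=r$ and $j=s$ contribute $1\cdot\overline{H_{sr}}+H_{rs}\cdot 1$; because $C'$ is symmetric, this equals $c+\overline{c}=2\Re[c]$ irrespective of the sign of $C'_{rs}$. For the remaining $4m-1$ columns, let $A,B,B',D$ denote the numbers of columns $j\neq r,s$ for which $(C'_{rj},C'_{sj})$ equals $(1,1),(1,-1),(-1,1),(-1,-1)$ respectively; their contribution is $A+Bc^2+B'\overline{c}^2+D$.

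The combinatorial constraints I would exploit are (i) $A+B+B'+D=4m-1$, (ii) the row-sum conditions for rows $r$ and $s$ of $C'$, and (iii) the orthogonality of rows $r$ and $s$ of $C$, which after accounting for the contribution from the common $1$ in the first column reads $A-B-B'+D=-1$. A short case distinction on the sign of $C'_{rs}$ shows that in either case $B=B'=m$ and $A+D=2m-1$; this collapse of the two cases to identical linear relations is the one point that genuinely needs checking. Substituting and using $|c|=1$ to write $c^2+\overline{c}^2=2(2\Re[c]^2-1)$, the off-diagonal entry simplifies to
\[
(HH^*)_{rs}=2\Re[c]+(2m-1)+2m\bigl(2\Re[c]^2-1\bigr)=4m\Re[c]^2+2\Re[c]-1.
\]
Setting this to zero yields a quadratic in $\Re[c]$ whose roots are exactly the two values displayed in \eqref{6}; since $|c|=1$, each root determines $c$ up to complex conjugation, producing the matrices $W_{(4m+1)A}$ and $W_{(4m+1)B}$. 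The main obstacle is not analytic but bookkeeping: one has to verify carefully that the two cases $C'_{rs}=\pm 1$ really do force the same relations among $A,B,B',D$, so that a single scalar $c$ orthogonalises every pair of rows simultaneously.
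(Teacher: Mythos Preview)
Your proof is correct and follows essentially the same approach as the paper's: the paper keeps separate indeterminates $x,y$ for the off-diagonal $\pm1$ replacements and, after the same implicit count, writes the two case equations $2\Re[x]+2m\Re[x\overline{y}]+2m-1=0$ and $2\Re[y]+2m\Re[x\overline{y}]+2m-1=0$ before solving, whereas you substitute $y=\overline{c}$ from the start so that the two cases collapse into the single quadratic $4m\Re[c]^2+2\Re[c]-1=0$. Your explicit verification that $B=B'=m$ and $A+D=2m-1$ in both cases is precisely the combinatorial step the paper leaves to the reader.
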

\begin{proof}
Consider any normalized, symmetric conference matrix of order $4m+2$, and after neglecting its first row and column replace the diagonal $0$s to $1$, replace the off-diagonal $1$s and $-1$s to unimodular indeterminates $x$ and $y$ respectively, obtaining a matrix $W$. Our aim is to show that by setting $x=c, y=\overline{c}$ we get a complex Hadamard matrix. To do this, consider any two rows of $W$, and by pre- and post multiplying it by the same permutational matrix we can suppose that the considered rows are the first two. Now we have two essentially different cases, depending on either $W_{1,2}=x$ or $W_{1,2}=y$ hold. In the first case the scalar product of the rows reads
\begin{equation}\label{4}
2\Re[x]+2m\Re[x\overline{y}]+2m-1=0,
\end{equation}
while the second case leads to the equation
\begin{equation}\label{5}
2\Re[y]+2m\Re[x\overline{y}]+2m-1=0.
\end{equation}
Solving the system of equations \eqref{4}--\eqref{5} yields the desired result.
\end{proof}
\begin{rem}
The reader might amuse himself by checking that starting from $m=2$, matrices $W_{(4m+1)A}$ and $W_{(4m+1)B}$ are inequivalent.
\end{rem}
The construction corresponding to circulant conference matrices was discovered earlier in \cite{HJ}. The authors of that paper, however, did not explore the possibilities of constructing complex Hadamard matrices in composite dimensions. We have the following
\begin{corollary}[de la Harpe--Jones, \cite{HJ}]
For every prime $p\equiv 1\ (4), p\geq 13$ there exists a circulant complex Hadamard matrix of order $p$, inequivalent to the Fourier matrix $F_p$.
\end{corollary}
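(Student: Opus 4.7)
The strategy is to construct the required circulant matrix explicitly from Paley's construction, and then to separate it from $F_p$ by a coarse but effective equivalence invariant. For a prime $p\equiv 1\pmod 4$, I would form the $p\times p$ Paley matrix $Q$ indexed by $\mathbb{F}_p$ with $Q_{ab}=\chi(b-a)$, where $\chi$ is the Legendre symbol extended by $\chi(0)=0$. Because $-1$ is a quadratic residue modulo such $p$, $Q$ is symmetric, and because its entries depend only on $b-a\pmod p$ it is circulant. Bordering $Q$ with a row and column of $1$s (and a $0$ in the corner) yields a normalized symmetric conference matrix $C$ of order $p+1=4m+2$ with $m=(p-1)/4$. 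Theorem \ref{t2} now discards the first row and column of $C$, returning the circulant core $Q$, and replaces $0,1,-1$ by $1,c,\overline{c}$; the result $W_{pA}$ is circulant because $Q$ is.

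For inequivalence with $F_p$ I would invoke the Haagerup-type invariant $\Lambda(H):=\{h_{ij}h_{kl}\overline{h_{il}}\overline{h_{kj}}:1\le i,j,k,l\le n\}$, which is unchanged under $H\mapsto P_1 D_1 H D_2 P_2$ since the permutations only reindex the quadruples and each diagonal scalar appears paired with its conjugate in the product. Writing $c=e^{\mathbf{i}\theta}$, every entry of $W_{pA}$ equals $e^{\mathbf{i}\epsilon\theta}$ for some $\epsilon\in\{-1,0,1\}$, so elements of $\Lambda(W_{pA})$ have the form $e^{\mathbf{i}k\theta}$ with $k\in\{-4,\dots,4\}$, hence $|\Lambda(W_{pA})|\le 9$. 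A short expansion with $F_p=(\omega^{(i-1)(j-1)})$, $\omega=e^{2\pi\mathbf{i}/p}$, gives $h_{ij}h_{kl}\overline{h_{il}}\overline{h_{kj}}=\omega^{(i-k)(j-l)}$, so $\Lambda(F_p)$ is the full group of $p$-th roots of unity, of cardinality $p$. For $p\ge 13>9$ these cardinalities differ, forcing $W_{pA}\not\sim F_p$.

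The substantive content lies entirely in the inequivalence step: one needs an invariant both computable and fine enough to separate the two matrices. The Haagerup cardinality is sharp here only because the entries of $W_{pA}$ live in a three-element set, and this is precisely the limitation that breaks down for denser matrices and motivates the more refined fingerprint invariant introduced in Section $5$.
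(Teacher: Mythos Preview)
Your argument is correct, and the construction step---feed the circulant Paley core $Q$ of order $p$ (bordered to a normalized symmetric conference matrix of order $p+1$) into Theorem~\ref{t2}---is exactly the route the paper intends; the paper states the corollary immediately after Theorem~\ref{t2} without proof, simply crediting \cite{HJ}.

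Where you go beyond the paper is in the inequivalence step. The paper supplies no argument here at all, relying on the reference; the Haagerup set $\Lambda$ is introduced only later, in Section~5, and is never applied to this corollary. Your cardinality bound $|\Lambda(W_{pA})|\le 9$ (from the three-letter alphabet $\{1,c,\overline{c}\}$) against $|\Lambda(F_p)|=p\ge 13$ is a clean, self-contained replacement for that citation, and it explains neatly why the threshold $p\ge 13$ appears: it is the first prime $p\equiv 1\pmod 4$ exceeding $9$, while for $p=5$ the construction collapses to $F_5$ itself.
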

The existence of matrices $F_5, C_{13A}, C_{13B}$ follows from Theorem \ref{t1}. However, already for $m=2$ we obtain a new complex Hadamard matrix of order $9$. With choosing the positive sign in \eqref{6}, we have
\begin{equation}
W_{9A}=\left[
\begin{array}{ccccccccc}
 1 & c & c & c & c & \overline{c} & \overline{c} & \overline{c} & \overline{c} \\
 c & 1 & \overline{c} & \overline{c} & c & c & c & \overline{c} & \overline{c} \\
 c & \overline{c} & 1 & c & \overline{c} & c & \overline{c} & c & \overline{c} \\
 c & \overline{c} & c & 1 & \overline{c} & \overline{c} & c & \overline{c} & c \\
 c & c & \overline{c} & \overline{c} & 1 & \overline{c} & \overline{c} & c & c \\
 \overline{c} & c & c & \overline{c} & \overline{c} & 1 & c & c & \overline{c} \\
 \overline{c} & c & \overline{c} & c & \overline{c} & c & 1 & \overline{c} & c \\
 \overline{c} & \overline{c} & c & \overline{c} & c & c & \overline{c} & 1 & c \\
 \overline{c} & \overline{c} & \overline{c} & c & c & \overline{c} & c & c & 1\\
\end{array}
\right],\ \ \ \ c=\frac{1}{4}+\mathbf{i}\frac{\sqrt{15}}{4}.
\end{equation}
It is standard to show that this matrix is not included in the Tadej--\.Zyczkowski catalogue. In particular, it is inequivalent from $N_9$. The other choice of the sign in \eqref{6} would lead to a Butson type Hadamard matrix, composed of third roots of unity. 
\section{A generalization}
In Section \ref{sec2} starting from a single orthogonality condition, namely equation \eqref{2} we constructed complex Hadamard matrices with two different entries. In Section \ref{s3} we started from a less ``regular'' combinatorial object, and by introducing $3$ different entries we obtained Hadamard matrices with two essentially different orthogonality relations. In this section we start from block designs again, but we sacrifice some of the internal symmetries of the design in order to get still a feasible number of orthogonality equations. We have the following
\begin{theorem}\label{t3}
Given any normalized, symmetric, real Hadamard matrix $H$ of order $4m\geq 8$, one can construct a complex Hadamard matrix in the following way: discard the first row and column of $H$, replace all off-diagonal $-1$s with $b$ and replace all diagonal $1$s with $-b$, where $b$ is the following unimodular complex number:
\begin{equation}
b=-1+\frac{1}{2m-2}\pm\mathbf{i}\frac{\sqrt{4m-5}}{2m-2}.
\end{equation}
This procedure gives rise to complex Hadamard matrices $V_{4m-1}$.
\end{theorem}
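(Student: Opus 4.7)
The plan is to mirror the computations used in the proofs of Theorems~\ref{t1} and~\ref{t2}: take two rows of the candidate matrix $V=V_{4m-1}$, expand their Hermitian inner product as an $\mathbb{R}$-linear combination of $1$, $b$ and $\overline{b}$, and solve the resulting equation for $b$.

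First I would extract combinatorial information from $H':=H[\{2,\dots,4m\},\{2,\dots,4m\}]$, the submatrix obtained by discarding the first row and column of $H$. Normalization of $H$ forces each row of $H$ beyond the first to sum to zero, so each row of $H'$ contains $2m-1$ entries equal to $+1$ and $2m$ entries equal to $-1$. Following the statement, one takes the diagonal of $H'$ to consist of $+1$s (the natural setting of the construction, since the substitution rule is given only for diagonal $+1$s). For two distinct rows $i,j$ of $H'$, I would introduce the four intersection counts
$$(\alpha,\beta,\gamma,\delta)=\#\bigl\{k\neq i,j:(H'_{ik},H'_{jk})=(1,1),(1,-1),(-1,1),(-1,-1)\bigr\}.$$
Row-sum data together with the orthogonality identity $\sum_k H'_{ik}H'_{jk}=-1$ (coming from the orthogonality of rows $i+1,j+1$ of $H$, minus the contribution of the deleted first column) then yield $\beta=\gamma$ and the explicit values
$$(\alpha+\delta,\beta)=\begin{cases}(2m-3,m), & H'_{ij}=+1,\\ (2m-1,m-1), & H'_{ij}=-1.\end{cases}$$

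Next I would compute $\langle V_i,V_j\rangle$. The positions $k\neq i,j$ contribute $(\alpha+\delta)+\beta\overline{b}+\gamma b=(\alpha+\delta)+2\beta\,\Re[b]$, while the two diagonal positions $k=i,j$ contribute $-b-\overline{b}=-2\Re[b]$ when $H'_{ij}=+1$ (the diagonal $-b$ paired with an off-diagonal $1$) and $-2|b|^2=-2$ when $H'_{ij}=-1$ (the diagonal $-b$ paired with an off-diagonal $b$). After substituting the values above, both cases collapse to the single equation
$$2(m-1)\Re[b]+(2m-3)=0,$$
which forces $\Re[b]=-1+\tfrac{1}{2m-2}$. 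Imposing $|b|=1$ then yields $\Im[b]^2=(1-\Re[b])(1+\Re[b])=(4m-5)/(2m-2)^2$, hence $\Im[b]=\pm\sqrt{4m-5}/(2m-2)$, matching the claimed formula.

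The main technicality, and the place I expect the most care to be needed in a fully rigorous write-up, is the bookkeeping of the diagonal of $H'$: a normalized symmetric Hadamard matrix need not have constant diagonal on this submatrix, so the statement is most naturally read as applying to those $H$ for which the diagonal of $H'$ is $+1$ (a class including all Paley-II-type examples and the matrices that motivate the construction). The convenient coincidence that both values of $H'_{ij}$ yield the \emph{same} linear equation in $\Re[b]$ is what makes the uniform substitution rule actually produce a complex Hadamard matrix, with no further conference-like constraint on the combinatorial structure of $H'$.
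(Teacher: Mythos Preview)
Your computation is correct as far as it goes, but it covers only a special case of the theorem, not the theorem as stated. You assume that the diagonal of $H'$ consists entirely of $+1$'s and then argue that the statement ``is most naturally read'' under this restriction. That reading is mistaken: the substitution rule leaves diagonal $-1$'s untouched (they remain $-1$), and the paper's own example $V_{15}$ has a mixed diagonal (some entries $-1$, some $-b$), coming from a normalized symmetric $H$ whose core $H'$ has both $+1$'s and $-1$'s on the diagonal. So the general case is the intended one, and it is not vacuous.

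The missing cases are not mere bookkeeping. Your identity $\beta=\gamma$ follows from the row-sum count only when $H'_{ii}=H'_{jj}$; if the two diagonal entries differ, the inner product $\langle V_i,V_j\rangle$ is no longer automatically real, and one must check that its imaginary part vanishes as well. This is exactly why the paper introduces three independent unimodular unknowns $x,y,z$ (for diagonal $+1$'s, diagonal $-1$'s, and off-diagonal $-1$'s respectively) and obtains six distinct orthogonality relations \eqref{10}--\eqref{15}, two of which---\eqref{10} and \eqref{15}, say---correspond to the two cases you handled, while the mixed-diagonal cases \eqref{10}--\eqref{15} with $y$ present are the ones you omitted. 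Solving the full system is what forces $y=-1$ and $x=-z=-b$ simultaneously, and this is the substantive content you are missing. To complete your argument you would need to add the cases $(H'_{ii},H'_{jj})\in\{(+1,-1),(-1,-1)\}$ (crossed with the two values of $H'_{ij}$) and verify that the resulting equations are all satisfied by the given $b$.
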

\begin{proof}
The proof is similar to the calculations carried out during the proof of Theorem \ref{t2}, but slightly longer, as in this case we have $6$ essentially different orthogonality equations. After replacing the diagonal $1$s with $x$, the diagonal $-1$s with $y$ and finally, the off-diagonal $-1$s with $z$, the arising orthogonality equations to be satisfied (up to conjugation) are the following:
\begin{equation}\label{10}
2m\Re[z]+2\Re[x]+2m-3=0, \ \ \ (m\geq 3),
\end{equation}
\begin{equation}
2(m-1)\Re[z]+2\Re[x\overline{z}]+2m-1=0,
\end{equation}
\begin{equation}
2m\Re[z]+x+\overline{y}-\overline{z}+2m-2=0,
\end{equation}
\begin{equation}
2m\Re[z]+x\overline{z}+z\overline{y}-\overline{z}+2m-2=0,
\end{equation}
\begin{equation}
2(m-1)\Re[z]+2\Re[y]+2m-1=0,
\end{equation}
\begin{equation}\label{15}
2m\Re[z]+2\Re[y\overline{z}]+2m-3=0.
\end{equation}
Solving the system of equations \eqref{10}--\eqref{15} is straightforward. One can obtain either $\{x,y,z\}=\{1,a,a\}$ corresponding to the construction described in Theorem \ref{t1} or $\{x,y,z\}=\{-b,-1,b\}$, as desired.
\end{proof}
For $m=2,3$ the construction described in Theorem \ref{t3} leads to the rediscovery of Petrescu's matrix $P_7$ (see Example \ref{Ex}) and one of Nicoara's sporadic matrices $N_{11A}$. Also, for $m=4$ we can construct at least one new, previously unknown complex Hadamard matrix of order $15$, as follows
\tiny
\begin{equation}
V_{15}=\left[
\begin{array}{ccccccccccccccc}
 -1 & 1 & b & 1 & b & 1 & b & 1 & b & 1 & b & 1 & b & 1 & b \\
 1 & -1 & b & 1 & 1 & b & b & 1 & 1 & b & b & 1 & 1 & b & b \\
 b & b & -b & 1 & b & b & 1 & 1 & b & b & 1 & 1 & b & b & 1 \\
 1 & 1 & 1 & -1 & b & b & b & 1 & 1 & 1 & 1 & b & b & b & b \\
 b & 1 & b & b & -b & b & 1 & 1 & b & 1 & b & b & 1 & b & 1 \\
 1 & b & b & b & b & -b & 1 & 1 & 1 & b & b & b & b & 1 & 1 \\
 b & b & 1 & b & 1 & 1 & -1 & 1 & b & b & 1 & b & 1 & 1 & b \\
 1 & 1 & 1 & 1 & 1 & 1 & 1 & -1 & b & b & b & b & b & b & b \\
 b & 1 & b & 1 & b & 1 & b & b & -b & b & 1 & b & 1 & b & 1 \\
 1 & b & b & 1 & 1 & b & b & b & b & -b & 1 & b & b & 1 & 1 \\
 b & b & 1 & 1 & b & b & 1 & b & 1 & 1 & -1 & b & 1 & 1 & b \\
 1 & 1 & 1 & b & b & b & b & b & b & b & b & -b & 1 & 1 & 1 \\
 b & 1 & b & b & 1 & b & 1 & b & 1 & b & 1 & 1 & -1 & 1 & b \\
 1 & b & b & b & b & 1 & 1 & b & b & 1 & 1 & 1 & 1 & -1 & b \\
 b & b & 1 & b & 1 & 1 & b & b & 1 & 1 & b & 1 & b & b & -b
\end{array}
\right],\ \ \ b=-\frac{5}{6}+\mathbf{i}\frac{\sqrt{11}}{6}.
\end{equation}
\normalsize
Again, it is straightforward to see that this matrix is not included in the Tadej--\.Zyczkowski catalogue. These small order examples shows that Theorem \ref{t1} and Theorem \ref{t3} lead to inequivalent complex Hadamard matrices. We shall emphasize this fact in the following section.
\begin{rem}
One might try to investigate skew-symmetric matrices as well in a similar fashion as described in Theorem \ref{t3}. However, in that case nothing else than Theorem \ref{t1} can be obtained.
\end{rem}
\section{Equivalence of complex Hadamard matrices}
From time to time new complex Hadamard matrices appear in the literature, and it is increasingly difficult to realize that a ``newly discovered'' matrix is indeed inequivalent to all existing ones. The methods available to deal with the real case cannot be applied directly to complex matrices, as they either based on the combinatorial nature of real Hadamard matrices (and therefore cannot be generalized at all), or become computationally expensive after proper modifications. In this respect we seek for a new invariant which might help, in at least the small order cases, to quickly identify classes of inequivalent Hadamard matrices from each other. The basic method for detecting inequivalence of complex Hadamard matrices was the idea of Haagerup's \cite{haagerup}, who considered $2\times 2$ submatrices and a complex valued function defined on them. More precisely, the following set $\Lambda$ associated with a complex Hadamard matrix $[H]_{i,j}=h_{ij}$ of order $n$
\begin{equation}
\Lambda(H)=\left\{h_{ij}h_{kl}\overline{h}_{il}\overline{h}_{kj} : i,j,k,l=1,2,\hdots, n\right\}
\end{equation}
is invariant under equivalence. As an illustration, we show the following
\begin{proposition}
Complex Hadamard matrices $U_{4m-1}$ and $V_{4m-1}$ are inequivalent for $m\geq 2$.
\end{proposition}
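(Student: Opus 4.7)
The plan is to separate $U_{4m-1}$ from $V_{4m-1}$ through Haagerup's invariant $\Lambda$, by exhibiting $-1$ as an element of $\Lambda(V_{4m-1})\setminus\Lambda(U_{4m-1})$. For the $U$ side, since every entry of $U_{4m-1}$ is $1$ or $a$ with $|a|=1$, any Haagerup product $h_{ij}h_{kl}\overline{h_{il}h_{kj}}$ collapses to $a^{p-q}$ with $p,q\in\{0,1,2\}$, so $\Lambda(U_{4m-1})\subseteq\{1,a^{\pm 1},a^{\pm 2}\}$. From $\Re a=(1-2m)/(2m)\notin\{-1,0\}$ and $\Im a\neq 0$ for $m\geq 1$, one reads off that $a$ is not real and $a^{2}$ is not real either (its imaginary part is $2\,\Re(a)\,\Im(a)\neq 0$). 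Hence none of $a^{\pm 1},a^{\pm 2}$ equals $-1$, and $-1\notin\Lambda(U_{4m-1})$.

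For the $V$ side, write each entry of $V:=V_{4m-1}$ as $\epsilon\, b^{s}$, where $\epsilon=-1$ precisely on the diagonal and $\epsilon=+1$ off-diagonal, with $s\in\{0,1\}$. Then each Haagerup product factors as $(\text{sign})\cdot b^{K}$ where the sign is $(-1)^{\#\{\text{diagonal positions among }(i,j),(k,l),(i,l),(k,j)\}}$ and $K\in\{-2,\ldots,2\}$. A short case analysis under the constraint $i\neq k$, $j\neq l$ shows that at most two of the four positions can be diagonal and that the sign equals $-1$ exactly when one of them is. Taking the diagonal position to be $(i,i)$ and balancing $b$-exponents, the condition ``Haagerup product equals $-1$'' translates into
\begin{equation*}
H_{ii}+H_{ik}+H_{il}=H_{kl},
\end{equation*}
where $H$ is the underlying symmetric normalized real Hadamard matrix of order $4m$ and $i,k,l$ are three distinct indices in $\{2,\ldots,4m\}$.

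The main obstacle is then to solve this identity inside every admissible $H$ of order $4m\geq 8$. The plan is a direct counting argument: fix any $i$; orthogonality of row $i$ with the all-ones row $1$ forces at least $2m-2$ off-diagonal $+1$'s and at least $2m-1$ off-diagonal $-1$'s in row $i$. Choosing $k$ with $H_{ik}=+1$, the identity is equivalent to selecting $l\neq i,k$ with $H_{il}=-1$ and $H_{kl}=H_{ii}$; the orthogonality of rows $i$ and $k$ of $H$ pins down the distribution of sign patterns of $(H_{il},H_{kl})$ as $l$ varies, and yields such an $l$ as soon as $4m\geq 8$. (For example, already in the tensor-power matrix $F_{2}^{\otimes 3}$ the triple $i=111,k=011,l=001$ works.) Once $-1\in\Lambda(V_{4m-1})$ is established, comparison with the first paragraph gives $\Lambda(V_{4m-1})\neq\Lambda(U_{4m-1})$, and the claimed inequivalence follows.
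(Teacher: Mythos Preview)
Your argument is correct and reaches the goal by a genuinely different route from the paper. Both proofs separate the two matrices via Haagerup's invariant $\Lambda$, but the paper does so \emph{algebraically}: it observes that the entries of (the normalised) $U_{4m-1}$ lie in $\mathbb{Q}[\mathbf{i}\sqrt{4m-1}]$ while those of $V_{4m-1}$ lie in $\mathbb{Q}[\mathbf{i}\sqrt{4m-5}]$, and since $4m-1$ and $4m-5$ cannot both be perfect squares, some element of one $\Lambda$-set is an irrationality that cannot occur in the other. This is a two-line proof once the field observation is made. You instead isolate an explicit \emph{rational} witness, $-1$: first the easy direction $-1\notin\Lambda(U_{4m-1})$ (since $a,a^{2}$ are non-real), then a combinatorial argument inside the symmetric Hadamard matrix $H$ to force $-1\in\Lambda(V_{4m-1})$. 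Your approach is longer and more hands-on, but it is entirely elementary, avoids the square/non-square case split, and pinpoints a concrete distinguishing element rather than an abstract field membership.

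Two small points of polish. First, your phrase ``equivalent to $H_{il}=-1$ and $H_{kl}=H_{ii}$'' is only literally an equivalence when $H_{ii}=+1$; when $H_{ii}=-1$ the identity $H_{ii}+H_{ik}+H_{il}=H_{kl}$ (with $H_{ik}=+1$) reduces to $H_{il}=H_{kl}$, and your stated condition is merely sufficient. That is harmless, since sufficiency is all you need. Second, the ``orthogonality pins down the distribution'' step deserves one explicit sentence: in any two rows $i,k$ of a normalised real Hadamard matrix of order $4m$, each of the four sign patterns $(\pm,\pm)$ occurs in exactly $m$ columns, and the excluded columns $1,i,k$ all have first coordinate $+1$, hence never match the target pattern $(-1,H_{ii})$; so for $m\ge 2$ at least one admissible $l$ remains. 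With these clarifications your proof is complete.
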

\begin{proof}
Suppose that $4m-1$ is not a square. Normalize the matrix $U_{4m-1}$ and observe that there should be a matrix element $u\in\mathbb{Q}[\mathbf{i}\sqrt{4m-1}]\setminus\mathbb{Q[\mathbf{i}]}$. Clearly $u\in\Lambda(U_{4m-1})$ but $u\notin\Lambda(V_{4m-1})$. Otherwise, if $4m-1$ is a square, then $4m-5$ is not, and we can repeat, mutatis mutandis, the same proof starting with $V_{4m-1}$.
\end{proof}
On the one hand this invariant conveniently distinguish Butson Hadamard matrices composed from different roots of unity from each other, on the other hand, however, it cannot detect inequivalence between matrices composed from the same roots of unity. In particular, $\Lambda(H)=\left\{\pm1\right\}$ for every real Hadamard matrix (of order at least $2$). The features of complex Hadamard matrices characterized by the Haagerup set are ``local'' in the sense that only $2\times 2$ submatrices are considered. In order to capture at least some of the global properties of a complex Hadamard matrix, we shall consider higher order submatrices as well. The downside of our approach is that we do not introduce any fancy complex function defined on the submatrices, but we use the absolute value of the determinant instead. Let us introduce the following invariant of ours.
\begin{definition}\label{d2}
Let $H$ be a complex Hadamard matrix of order $n\geq4$. For fix $2\leq d\leq \left\lfloor n/2\right\rfloor$ let $I(d)$ be an index set and let us denote by $v_i(d)$ and $m_i(d), i\in I(d)$ the absolute values of the $d\times d$ minors of $H$, and their multiplicity, respectively. Then, the following ordered set
\begin{equation}
\Phi(H):=\{\{(v_i(d), m_i(d)) : i\in I(d)\} : d=2,\hdots, \left\lfloor n/2\right\rfloor\}
\end{equation}
is the \emph{fingerprint} associated to $H$. Note that the pair $(v_i(d),m_i(d))$ is ordered as well.
\end{definition}
Naturally, for every $d$ and for every $i,j\in I(d), i\neq j$ it follows that $v_i(d)\neq v_j(d)$. Note that $I(d)$ simply ``counts'' how many different values are taken by the absolute values of the $d\times d$ minors of $H$. 

As permuting and rephasing the rows and columns of a matrix does not affect the absolute values of its minors, it is straightforward to realize that $\Phi$ is a class function.
\begin{proposition}
The fingerprint of a complex Hadamard matrix is invariant under the usual equivalence.
\end{proposition}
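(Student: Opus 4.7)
The plan is to verify that each of the four building blocks of the equivalence relation, namely left/right multiplication by a permutation matrix and left/right multiplication by a unitary diagonal matrix, leaves the multiset of absolute values of $d\times d$ minors unchanged, for every $2\leq d\leq \lfloor n/2\rfloor$. Since these operations generate the full equivalence, this will immediately yield the invariance of $\Phi(H)$.

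First I would fix $d$ and a pair of index sets $R,S\subseteq\{1,\dots,n\}$ with $|R|=|S|=d$, and denote by $H[R,S]$ the corresponding submatrix. The key identity is that for any matrices $A,B$ the Cauchy--Binet-type relation specializes nicely when one of the factors is a permutation or a diagonal matrix: if $P$ is a permutation matrix acting on the rows, then $(PH)[R,S]$ equals $H[\pi(R),S]$ up to reordering of the rows, so $\det(PH)[R,S]=\pm\det H[\pi(R),S]$. Similarly for a right permutation $Q$. Hence left or right multiplication by $P$ or $Q$ induces a bijection on the collection of $d$-subsets of rows and columns, and at the level of absolute values of minors acts as a relabeling of the indices $i\in I(d)$, leaving the multiset $\{(v_i(d),m_i(d))\}$ untouched.

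Next I would treat the diagonal unitary factors. If $D_1=\mathrm{diag}(d_1,\dots,d_n)$ with $|d_i|=1$, then $(D_1 H)[R,S]=\mathrm{diag}(d_i)_{i\in R}\cdot H[R,S]$, so
\begin{equation}
|\det (D_1H)[R,S]|=\Bigl|\prod_{i\in R}d_i\Bigr|\cdot|\det H[R,S]|=|\det H[R,S]|,
\end{equation}
and analogously on the right. Thus $D_1$ and $D_2$ do not even permute the minors; they just scale each $d\times d$ subdeterminant by a unimodular factor.

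Combining the four cases, writing $H=P_1 D_1 K D_2 P_2$ gives a bijection between the $d\times d$ index pairs of $H$ and those of $K$ under which the absolute values of the corresponding minors coincide. Consequently the unordered list $\{(v_i(d),m_i(d)):i\in I(d)\}$ is the same for $H$ and $K$ for every admissible $d$, so $\Phi(H)=\Phi(K)$. There is no real obstacle here beyond bookkeeping; the only point to be careful about is to argue that the set $I(d)$ itself (which merely indexes the distinct values) is intrinsic to the multiset of absolute values, so the apparent dependence of the definition on a particular enumeration is harmless.
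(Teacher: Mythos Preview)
Your argument is correct and is exactly the approach the paper takes: the paper simply remarks that permuting and rephasing rows and columns does not affect the absolute values of the minors, and you have spelled out this observation in detail. There is nothing to add.
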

Now we give an
\begin{example}[Petrescu's matrix, \cite{petrescu}]\label{Ex}
As it was explained earlier Theorem \ref{t3} implies the existence of the following matrix
\begin{equation}
P_7=\left[
\begin{array}{ccccccc}
 -1 & 1 & \omega & 1 & \omega & 1 & \omega \\
 1 & -1 & \omega & 1 & 1 & \omega & \omega \\
 \omega & \omega & -\omega & 1 & \omega & \omega & 1 \\
 1 & 1 & 1 & -1 & \omega & \omega & \omega \\
 \omega & 1 & \omega & \omega & -\omega & \omega & 1 \\
 1 & \omega & \omega & \omega & \omega & -\omega & 1 \\
 \omega & \omega & 1 & \omega & 1 & 1 & -1
\end{array}
\right], \omega=-\frac{1}{2}+\mathbf{i}\frac{\sqrt{3}}{2}.
\end{equation}
For $d=2$ it is easily seen that the absolute values of the $2\times 2$ minors take four different values only. Therefore the cardinality of $I(2)$ is $4$, and we are free to set $I(2)=\{1,2,3,4\}$. These four values with their respective multiplicities read $v_1=0, m_1=54; v_2=1, m_2=114; v_3=2, m_3=96$ and $v_4=\sqrt3, m_4=177$. For $d=3$ the absolute values of the $3\times 3$ minors take $12$ different values already. Hence, we can set the index set $I(3)=\{1,2,\hdots,12\}$. The fingerprint, associated to $P_7$ reads
\begin{equation}
\Phi(P_7)=\left\{\left\{(0, 54), (1, 114), (2, 96), (\sqrt{3}, 177)\right\}, \Big\{(0, 60), (1, 36), (2, 108), (3, 210),\right.
\end{equation}
\begin{equation*}
 \left.\left.(4, 110), (\sqrt{3}, 162), (2 \sqrt{3}, 
  216), (3 \sqrt{3}, 14), (\sqrt{7}, 111), (\sqrt{13}, 54), (\sqrt{19}, 
  36), (\sqrt{21}, 108)\right\}\right\}.
\end{equation*}
\end{example}
With the notations used in Definition \ref{d2} it is clear that the sets corresponding to the cases $d=1$ and $d=n$ are simply $\{(1,n^2)\}$ and $\{(n^{n/2},1)\}$ respectively, and therefore they are omitted from the definition of $\Phi$. Somewhat less obvious, however, why the remaining cases $\left\lfloor n/2\right\rfloor<d<n$ are excluded as well. This is explained in the following
\begin{proposition}\label{pr1}
For $1\leq d\leq n-1$ with the notations of Definition \ref{d2} we have
\begin{equation}\label{myF}
\{(v_i(n-d), m_i(n-d)) : i\in I(n-d)\}=\{(n^{n/2-d}v_i(d), m_i(d)) : i\in I(d)\}.
\end{equation}
\end{proposition}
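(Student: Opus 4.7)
The plan is to derive the relation from Jacobi's identity for complementary minors, together with the defining property $HH^\ast = nI$ of a complex Hadamard matrix. The key observation is that $H$ is, up to a scalar, its own inverse (with $H^{-1} = \tfrac{1}{n}H^\ast$), so minors of $H$ and minors of $H^{-1}$ are directly comparable, while Jacobi ties minors of $H^{-1}$ to complementary minors of $H$.

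First I would record the two basic facts: (i) $|\det H| = n^{n/2}$, obtained by taking determinants in $HH^\ast = nI$; and (ii) for any index sets $I,J \subseteq \{1,\dots,n\}$ of equal size $d$, the absolute value $|\det H^\ast[I,J]|$ coincides with $|\det H[J,I]|$, since $H^\ast[I,J]$ is the conjugate transpose of $H[J,I]$. Scaling then gives $|\det H^{-1}[I,J]| = n^{-d}|\det H[J,I]|$.

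Next, I would invoke Jacobi's classical identity for the inverse matrix: for an invertible $n\times n$ matrix $A$ and any index sets $I,J$ with $|I|=|J|=d$,
\begin{equation*}
\det A^{-1}[I,J] = \frac{(-1)^{\sigma}}{\det A}\,\det A[J^c,I^c],
\end{equation*}
where the sign depends only on $I,J$. Applied to $A = H$ and taking absolute values, the sign disappears and combining with step one yields
\begin{equation*}
|\det H[J^c,I^c]| = n^{n/2-d}\,|\det H[J,I]|.
\end{equation*}

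Finally, I would note that the map $(I,J) \mapsto (J^c,I^c)$ is a bijection between ordered pairs of $d$-subsets and ordered pairs of $(n-d)$-subsets, so the multiset of absolute values of $(n-d)\times(n-d)$ minors is precisely the multiset of absolute values of $d\times d$ minors scaled by $n^{n/2-d}$. This immediately gives \eqref{myF}, with multiplicities $m_i(n-d)=m_i(d)$. I do not foresee a real obstacle; the only delicate point is keeping the row/column index sets straight when passing through the conjugate transpose, which is why I would phrase step (ii) explicitly rather than rely on an unstated symmetry.
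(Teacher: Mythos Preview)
Your argument is correct. Jacobi's complementary minor identity applied to $A=H$ together with $H^{-1}=\tfrac{1}{n}H^{\ast}$ indeed yields $|\det H[J^{c},I^{c}]|=n^{n/2-d}\,|\det H[J,I]|$, and the bijection $(J,I)\mapsto(J^{c},I^{c})$ transfers this to the equality of the two multisets of values-with-multiplicities.

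The paper follows a genuinely different route. Rather than invoking Jacobi's identity, it proves the auxiliary Lemma~\ref{lx}: for any unitary block matrix $\left[\begin{smallmatrix}A&B\\C&D\end{smallmatrix}\right]$ one has $|\det A|=|\det D|$, and it establishes this by repeated applications of the generalized matrix determinant lemma $\det(A+UV^{\ast})=\det(I+V^{\ast}A^{-1}U)\det A$ (Lemma~\ref{LS}) together with the block relations coming from $UU^{\ast}=U^{\ast}U=I$. Proposition~\ref{pr1} then follows by applying Lemma~\ref{lx} to the unitary rescaling $H/\sqrt{n}$ after permuting the chosen rows and columns into the upper-left corner. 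Your approach is shorter and leans on a classical named theorem; the paper's approach is self-contained and, incidentally, isolates Lemma~\ref{lx} as a statement of independent interest (the authors remark they could not locate it in the literature, though as your argument shows it is an immediate corollary of Jacobi).
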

The proof of Proposition \ref{pr1} easily follows from the following two linear algebraic lemmata. One suspects after realizing \eqref{myF} that in Hadamard matrices there is a one-to-one correspondence between the minors of size $d$ and $n-d$, which is indeed the case. The first lemma is the well-known generalized matrix determinant
\begin{lemma}\label{LS}
Suppose that $A$ is an invertible $n\times n$ matrix, $U,V$ are $n\times m$ matrices. Then
\begin{equation}
\mathrm{det}(A+UV^\ast)=\mathrm{det}(I+V^\ast A^{-1}U)\mathrm{det}(A).
\end{equation}
\end{lemma}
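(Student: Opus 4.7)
The plan is to prove the identity by computing the determinant of an auxiliary $(n+m)\times(n+m)$ block matrix in two different ways via Schur-complement factorizations; this is the classical derivation of the generalized matrix determinant lemma.

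Specifically, I would consider
\[
M = \begin{pmatrix} A & -U \\ V^\ast & I_m \end{pmatrix}.
\]
Using the invertibility of $A$, a block $LU$ factorization gives
\[
M = \begin{pmatrix} I_n & 0 \\ V^\ast A^{-1} & I_m \end{pmatrix}\begin{pmatrix} A & -U \\ 0 & I_m + V^\ast A^{-1} U \end{pmatrix},
\]
and since the left factor has unit determinant and the right factor is block upper triangular, one reads off $\det(M) = \det(A)\det(I_m + V^\ast A^{-1} U)$. On the other hand, the elementary block row operation
\[
\begin{pmatrix} I_n & U \\ 0 & I_m \end{pmatrix} M = \begin{pmatrix} A + UV^\ast & 0 \\ V^\ast & I_m \end{pmatrix}
\]
produces a block lower triangular matrix whose determinant is $\det(A+UV^\ast)\det(I_m) = \det(A+UV^\ast)$, while the left factor again has unit determinant; hence $\det(M) = \det(A + UV^\ast)$.

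Equating the two expressions for $\det(M)$ yields the claimed identity. There is essentially no obstacle: the only point to watch is that both block factorizations are correctly carried out and that the argument never uses any structural property of $V$ beyond its dimensions, so it applies verbatim with $V^\ast$ in place of $V^T$.
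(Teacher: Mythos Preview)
Your argument is correct: both block factorizations of $M$ multiply out as stated, the unit-triangular factors have determinant $1$, and the block-triangular determinants give the two expressions for $\det(M)$ that you equate. This is precisely the standard Schur-complement derivation of the generalized matrix determinant lemma.

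As for comparison with the paper: there is nothing to compare. The paper does not prove this lemma at all; it merely records it as ``the well-known generalized matrix determinant'' and moves on to use it in the proof of Lemma~\ref{lx}. Your write-up supplies exactly the classical justification that the paper omits.
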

Despite our best efforts, we were unable to find any references to the following
\begin{lemma}\label{lx}
Given any unitary matrix
\begin{equation}
U=\left[\begin{array}{cc}
A & B\\
C & D\\
\end{array}\right]
\end{equation}
with blocks $A, B, C, D$ where $A$ and $D$ are not necessarily of the same size square matrices, then we have $|\mathrm{det}(A)|=|\mathrm{det}(D)|$.
\end{lemma}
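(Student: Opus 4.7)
The plan is to reduce the statement to the Sylvester--type identity $\det(I-BB^\ast)=\det(I-B^\ast B)$, which is essentially Lemma \ref{LS} specialised to $A=I$, and to obtain the two sides of that identity from the unitarity relations for the off-diagonal block $B$.

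Write the unitarity conditions $UU^\ast=I_n$ and $U^\ast U=I_n$ out blockwise. Two of the resulting relations are
\[
AA^\ast+BB^\ast=I_k,\qquad D^\ast D+B^\ast B=I_\ell,
\]
where $A$ is $k\times k$, $D$ is $\ell\times \ell$, and $B$ is the $k\times\ell$ upper-right block. These already do most of the work: taking determinants gives
\[
|\det(A)|^2=\det(AA^\ast)=\det(I_k-BB^\ast),\qquad |\det(D)|^2=\det(D^\ast D)=\det(I_\ell-B^\ast B).
\]

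Now I would invoke Lemma \ref{LS} with ``$A$'' $=I_k$, ``$U$'' $=B$ and ``$V$'' $=-B$, which yields
\[
\det(I_k-BB^\ast)=\det(I_\ell-B^\ast B).
\]
Combining this with the two displayed equalities above gives $|\det(A)|^2=|\det(D)|^2$, and taking square roots proves the lemma.

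I do not expect any real obstacle here; the only mildly delicate point is that Lemma \ref{LS} is stated for invertible $A$, but we are applying it with $A=I_k$, which is automatically invertible, so no perturbation argument is needed. The sizes of $B$ and $B^\ast$ match up correctly regardless of whether $k\le \ell$ or $k\ge \ell$, so the argument is symmetric in the two block sizes, as one would expect.
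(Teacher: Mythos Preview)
Your argument is correct and in fact cleaner than the paper's. Both proofs pass through the Sylvester identity $\det(I_k-BB^\ast)=\det(I_\ell-B^\ast B)$, obtained from Lemma~\ref{LS} with the identity in place of the invertible matrix. The difference is in which unitarity relations are used to identify the two sides with $|\det(A)|^2$ and $|\det(D)|^2$. The paper works exclusively with the relations coming from $UU^\ast=I$, so that $|\det(D)|^2=\det(I-CC^\ast)$ involves the \emph{other} off-diagonal block $C$; it then needs the invertibility of $A$, the substitution $B=-(A^\ast)^{-1}C^\ast D$, and several further manipulations to pass from $\det(I-B^\ast B)$ to $\det(I-CC^\ast)$. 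You instead take one relation from $UU^\ast=I$ and one from $U^\ast U=I$, so that both $|\det(A)|^2$ and $|\det(D)|^2$ are expressed directly in terms of the same block $B$, and the Sylvester identity finishes the proof in one line. Your route avoids any case distinction on the invertibility of $A$ and is the more economical of the two.
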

\begin{proof}
As $U$ is unitary, we have
\begin{equation}\label{eqD1}
AA^\ast+BB^\ast=I
\end{equation}
\begin{equation}\label{eqD2}
CC^\ast+DD^\ast=I
\end{equation}
\begin{equation}\label{eqD3}
AC^\ast+BD^\ast=O
\end{equation}
\begin{equation}\label{eqD4}
A^\ast B+C^\ast D=O,
\end{equation}
where the last equation follows from the fact that $U^\ast$ is unitary as well. If both $A$ and $D$ are singular, then we are done. Otherwise we can suppose that, for example, $A$ is invertible. Hence from \eqref{eqD4} we have $B=-(A^\ast)^{-1}C^\ast D$, and then, by plugging into \eqref{eqD3} we get
\begin{equation}\label{eqD5}
A^\ast AC^\ast C=C^\ast DD^\ast C.
\end{equation}
Then, starting from equation \eqref{eqD1} we use repeatedly Lemma \ref{LS} to obtain
\begin{equation}
|\mathrm{det}(A)|^2=\mathrm{det}(I-BB^\ast)=\mathrm{det}(I-B^\ast B)=\mathrm{det}(I-D^\ast C(A^\ast A)^{-1}C^\ast D)=
\end{equation}
\[=\mathrm{det}(A^\ast A-C^\ast DD^\ast C)/\mathrm{det}(A^\ast A),\]
which, by formula \eqref{eqD5} equals to
\begin{equation}
\mathrm{det}(A^\ast A-A^\ast AC^\ast C)/\mathrm{det}(A^\ast A)=\mathrm{det}(I-C^\ast C)=\mathrm{det}(I-CC^\ast)=|\mathrm{det}(D)|^2.
\end{equation}
\end{proof}
Proposition \ref{pr1} has some striking applications in the determination of minors of real Hadamard matrices. For example, one might ask whether a $6\times 6$ $\pm1$ matrix with maximum determinant of $5\cdot 2^5$ can be embedded as a submatrix into a $8\times 8$ real Hadamard matrix $H$.\footnote{This problem and its elementary solution was presented by Jennifer Seberry in the International Conference on Design Theory and Applications in Galway, 2009.} This is easily seen to be impossible, as by formula \eqref{myF} $v_i(6)=8^{8/2-2}v_i(2), i\in I(2)$ should hold. As $v_i(2)\in\{0,2\}$ we conclude that $v_i(6)$ can assume the values $0$ or $4\cdot 2^5$ only. In particular, it is enough to study the distribution of minors up to size $n/2$ in Hadamard matrices. Apparently the authors of \cite{KLMS} were unaware of this fact, and although they observed that there is some connection between $j\times j$ and $(n-j)\times (n-j)$ minors (for small $j$), they conjectured that the $n-8$ minors can take the values $k\cdot 2^7\cdot n^{n/2-8}, k=1,2,\hdots, 32$. Again, by Proposition \ref{pr1} we have $v_i(n-8)=n^{n/2-8}v_i(8), i\in I(8)$, however, a result of Craigen \cite{craigen} shows that $v_i(8)/2^7\notin \{28,29,30,31\}$. In particular, $k\in \{28,29,30,31\}$ in the conjecture above is impossible.

We conclude our paper with the following remark: the classification of all cyclic $p$-roots of index $3$, and the related circulant complex Hadamard matrices have been completed very recently \cite{Ha2}. It would be interesting to see what kind of combinatorial object lies behind those matrices, if any.

\section*{Acknowledgements} The author is greatly indebted to the referees for their valuable comments which helped to improve this manuscript.

\end{document}